\theoremstyle{plain}
\newtheorem{thm}{Theorem}
\newtheorem{cor}[thm]{Corollary}
\newtheorem{lem}[thm]{Lemma}
\newtheorem{prop}[thm]{Proposition}
\theoremstyle{definition}
\theoremstyle{remark}
\DeclareMathOperator{\MD}{MD}
\DeclareMathOperator{\rt}{root}
\newcommand{\abs}[1]{\left|#1\right|}
\newcommand{\set}[1]{\left\{#1\right\}}
\def\T{\mathcal{T}}
\def\F{\mathcal{F}}
\def\G{\mathcal{G}}
\def\H{\mathcal{H}}
\def\A{\mathcal{A}}
\title{On the enumeration of rooted trees with fixed size of maximal decreasing trees}
\author{Seunghyun Seo}
\address[Seunghyun Seo]{Kangwon National University}
\email{shyunseo@kangwon.ac.kr}
\author{Heesung Shin}
\address[Heesung Shin]{Inha University}
\email{shin@inha.ac.kr}
\date{\today}
\begin{document}
\maketitle
\begin{abstract}
Let $\T_{n}$ be the set of rooted labeled trees on $\set{0,\dots,n}$.
A maximal decreasing subtree of a rooted labeled tree is defined by the maximal subtree from the root with all edges being decreasing.
In this paper, we study a new refinement $\T_{n,k}$ of $\T_n$, which is the set of rooted labeled trees whose maximal decreasing subtree has $k+1$ vertices.

\end{abstract}

\section{Introduction}
For a nonnegative integer $n$, let $\T_{n}$ be the set of rooted labeled trees on $[0,n]:=\set{0,\dots,n}$.
For a given rooted labeled tree $T$, a \emph{maximal decreasing subtree} of $T$ is defined by the maximal subtree from the root with all edges being decreasing, denoted by $\MD(T)$. Figure~\ref{fig:tree} illustrates the maximal decreasing subtree of a given tree $T$.
\begin{figure}[t]
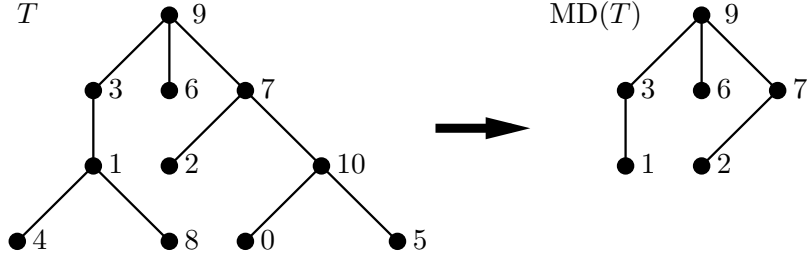

$$
\centering
\begin{pgfpicture}{17.00mm}{16.14mm}{126.00mm}{54.14mm}
\pgfsetxvec{\pgfpoint{1.00mm}{0mm}}
\pgfsetyvec{\pgfpoint{0mm}{1.00mm}}
\color[rgb]{0,0,0}\pgfsetlinewidth{0.30mm}\pgfsetdash{}{0mm}
\pgfcircle[fill]{\pgfxy(40.00,50.00)}{1.00mm}
\pgfcircle[stroke]{\pgfxy(40.00,50.00)}{1.00mm}
\pgfcircle[fill]{\pgfxy(30.00,40.00)}{1.00mm}
\pgfcircle[stroke]{\pgfxy(30.00,40.00)}{1.00mm}
\pgfcircle[fill]{\pgfxy(40.00,40.00)}{1.00mm}
\pgfcircle[stroke]{\pgfxy(40.00,40.00)}{1.00mm}
\pgfcircle[fill]{\pgfxy(50.00,40.00)}{1.00mm}
\pgfcircle[stroke]{\pgfxy(50.00,40.00)}{1.00mm}
\pgfcircle[fill]{\pgfxy(40.00,30.00)}{1.00mm}
\pgfcircle[stroke]{\pgfxy(40.00,30.00)}{1.00mm}
\pgfcircle[fill]{\pgfxy(60.00,30.00)}{1.00mm}
\pgfcircle[stroke]{\pgfxy(60.00,30.00)}{1.00mm}
\pgfcircle[fill]{\pgfxy(30.00,30.00)}{1.00mm}
\pgfcircle[stroke]{\pgfxy(30.00,30.00)}{1.00mm}
\pgfcircle[fill]{\pgfxy(50.00,20.00)}{1.00mm}
\pgfcircle[stroke]{\pgfxy(50.00,20.00)}{1.00mm}
\pgfcircle[fill]{\pgfxy(70.00,20.00)}{1.00mm}
\pgfcircle[stroke]{\pgfxy(70.00,20.00)}{1.00mm}
\pgfcircle[fill]{\pgfxy(40.00,20.00)}{1.00mm}
\pgfcircle[stroke]{\pgfxy(40.00,20.00)}{1.00mm}
\pgfcircle[fill]{\pgfxy(20.00,20.00)}{1.00mm}
\pgfcircle[stroke]{\pgfxy(20.00,20.00)}{1.00mm}
\pgfmoveto{\pgfxy(40.00,50.00)}\pgflineto{\pgfxy(40.00,40.00)}\pgfstroke
\pgfmoveto{\pgfxy(40.00,50.00)}\pgflineto{\pgfxy(50.00,40.00)}\pgfstroke
\pgfmoveto{\pgfxy(50.00,40.00)}\pgflineto{\pgfxy(60.00,30.00)}\pgfstroke
\pgfmoveto{\pgfxy(60.00,30.00)}\pgflineto{\pgfxy(70.00,20.00)}\pgfstroke
\pgfmoveto{\pgfxy(60.00,30.00)}\pgflineto{\pgfxy(50.00,20.00)}\pgfstroke
\pgfmoveto{\pgfxy(50.00,40.00)}\pgflineto{\pgfxy(40.00,30.00)}\pgfstroke
\pgfmoveto{\pgfxy(40.00,50.00)}\pgflineto{\pgfxy(30.00,40.00)}\pgfstroke
\pgfmoveto{\pgfxy(30.00,40.00)}\pgflineto{\pgfxy(30.00,30.00)}\pgfstroke
\pgfmoveto{\pgfxy(30.00,30.00)}\pgflineto{\pgfxy(40.00,20.00)}\pgfstroke
\pgfmoveto{\pgfxy(30.00,30.00)}\pgflineto{\pgfxy(20.00,20.00)}\pgfstroke
\pgfputat{\pgfxy(32.00,39.00)}{\pgfbox[bottom,left]{\fontsize{11.38}{13.66}\selectfont 3}}
\pgfputat{\pgfxy(22.00,19.00)}{\pgfbox[bottom,left]{\fontsize{11.38}{13.66}\selectfont 4}}
\pgfputat{\pgfxy(52.00,19.00)}{\pgfbox[bottom,left]{\fontsize{11.38}{13.66}\selectfont 0}}
\pgfputat{\pgfxy(62.00,29.00)}{\pgfbox[bottom,left]{\fontsize{11.38}{13.66}\selectfont 10}}
\pgfputat{\pgfxy(42.00,19.00)}{\pgfbox[bottom,left]{\fontsize{11.38}{13.66}\selectfont 8}}
\pgfputat{\pgfxy(72.00,19.00)}{\pgfbox[bottom,left]{\fontsize{11.38}{13.66}\selectfont 5}}
\pgfputat{\pgfxy(52.00,39.00)}{\pgfbox[bottom,left]{\fontsize{11.38}{13.66}\selectfont 7}}
\pgfputat{\pgfxy(42.00,29.00)}{\pgfbox[bottom,left]{\fontsize{11.38}{13.66}\selectfont 2}}
\pgfputat{\pgfxy(43.00,49.00)}{\pgfbox[bottom,left]{\fontsize{11.38}{13.66}\selectfont 9}}
\pgfputat{\pgfxy(32.00,29.00)}{\pgfbox[bottom,left]{\fontsize{11.38}{13.66}\selectfont 1}}
\pgfputat{\pgfxy(42.00,39.00)}{\pgfbox[bottom,left]{\fontsize{11.38}{13.66}\selectfont 6}}
\pgfputat{\pgfxy(20.00,49.00)}{\pgfbox[bottom,left]{\fontsize{11.38}{13.66}\selectfont $T$}}
\pgfcircle[fill]{\pgfxy(110.00,50.00)}{1.00mm}
\pgfcircle[stroke]{\pgfxy(110.00,50.00)}{1.00mm}
\pgfcircle[fill]{\pgfxy(100.00,40.00)}{1.00mm}
\pgfcircle[stroke]{\pgfxy(100.00,40.00)}{1.00mm}
\pgfcircle[fill]{\pgfxy(110.00,40.00)}{1.00mm}
\pgfcircle[stroke]{\pgfxy(110.00,40.00)}{1.00mm}
\pgfcircle[fill]{\pgfxy(120.00,40.00)}{1.00mm}
\pgfcircle[stroke]{\pgfxy(120.00,40.00)}{1.00mm}
\pgfcircle[fill]{\pgfxy(110.00,30.00)}{1.00mm}
\pgfcircle[stroke]{\pgfxy(110.00,30.00)}{1.00mm}
\pgfcircle[fill]{\pgfxy(100.00,30.00)}{1.00mm}
\pgfcircle[stroke]{\pgfxy(100.00,30.00)}{1.00mm}
\pgfmoveto{\pgfxy(110.00,50.00)}\pgflineto{\pgfxy(110.00,40.00)}\pgfstroke
\pgfmoveto{\pgfxy(110.00,50.00)}\pgflineto{\pgfxy(120.00,40.00)}\pgfstroke
\pgfmoveto{\pgfxy(120.00,40.00)}\pgflineto{\pgfxy(110.00,30.00)}\pgfstroke
\pgfmoveto{\pgfxy(110.00,50.00)}\pgflineto{\pgfxy(100.00,40.00)}\pgfstroke
\pgfmoveto{\pgfxy(100.00,40.00)}\pgflineto{\pgfxy(100.00,30.00)}\pgfstroke
\pgfputat{\pgfxy(102.00,39.00)}{\pgfbox[bottom,left]{\fontsize{11.38}{13.66}\selectfont 3}}
\pgfputat{\pgfxy(122.00,39.00)}{\pgfbox[bottom,left]{\fontsize{11.38}{13.66}\selectfont 7}}
\pgfputat{\pgfxy(112.00,29.00)}{\pgfbox[bottom,left]{\fontsize{11.38}{13.66}\selectfont 2}}
\pgfputat{\pgfxy(113.00,49.00)}{\pgfbox[bottom,left]{\fontsize{11.38}{13.66}\selectfont 9}}
\pgfputat{\pgfxy(102.00,29.00)}{\pgfbox[bottom,left]{\fontsize{11.38}{13.66}\selectfont 1}}
\pgfputat{\pgfxy(112.00,39.00)}{\pgfbox[bottom,left]{\fontsize{11.38}{13.66}\selectfont 6}}
\pgfputat{\pgfxy(90.00,49.00)}{\pgfbox[bottom,left]{\fontsize{11.38}{13.66}\selectfont $\MD(T)$}}
\pgfsetlinewidth{1.20mm}\pgfmoveto{\pgfxy(75.00,35.00)}\pgflineto{\pgfxy(85.00,35.00)}\pgfstroke
\pgfmoveto{\pgfxy(85.00,35.00)}\pgflineto{\pgfxy(82.20,35.70)}\pgflineto{\pgfxy(82.20,34.30)}\pgflineto{\pgfxy(85.00,35.00)}\pgfclosepath\pgffill
\pgfmoveto{\pgfxy(85.00,35.00)}\pgflineto{\pgfxy(82.20,35.70)}\pgflineto{\pgfxy(82.20,34.30)}\pgflineto{\pgfxy(85.00,35.00)}\pgfclosepath\pgfstroke
\end{pgfpicture}%
$$
\caption{The maximal decreasing subtree of $T$ from the root $9$}
\label{fig:tree}
\end{figure}
Let $\T_{n,k}$ be the set of rooted labeled trees in $\T_n$ whose maximal decreasing subtree has $k+1$ vertices.

Within the scope of proven research, a maximal decreasing subtree first appeared in the paper \cite{CDG00} of Chauve, Dulucq, and Guibert, for constructing the bijection between $\T_{n,0}$ and the set of trees in $\T_n$ with $n$ being a leaf. Recently, Bergeron and Livernet \cite{BL10} introduced it in order to analyze the free Lie algebra based on rooted labeled trees. None of them mentioned, however, the refined set $\T_{n,k}$ nor considered the enumeration of $\T_{n,k}$.


In Section~\ref{sec:main}, we shall count the number of elements in $\T_{n,k}$. We shall also introduce a set of certain functions on $[n]$, which is equinumerous to $\T_{n,k}$.
In Section~\ref{sec:properties}, we shall decompose a rooted labeled tree into rooted subtrees, each maximal decreasing subtree of which is a single vertex. Then some formulae related to $\abs{\T_{n,k}}$ are given from this decomposition.
In Section~\ref{sec:inverse}, using the inverse of the matrix $\left[ {i+j \choose j}\right]_{0 \le i,j \le n}$, $\T_{n,k}$ can be expressed as a linear combination of $\set{(n+1)^n,(n+2)^n,\cdots,(2n+1)^n}$.
In the last section, we discuss bijective proofs of our results.


\section{Main results}\label{sec:main}
First of all, let us count the number of elements in the set $\T_{n,k}$.
\begin{thm}\label{thm:tree}
For nonnegative integers $n$ and $k$, we have
$$\abs{\T_{n,k}} = \sum_{m=k}^n {n+1 \choose m+1} S(m+1,k+1) \, k! \, (n-k)^{n-m-1} (m-k),$$
where $S(n,k)$ is a Stirling number of the second kind.
\end{thm}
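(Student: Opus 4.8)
The plan is to count elements of $\T_{n,k}$ by conditioning on the structure of the maximal decreasing subtree together with a slightly larger intermediate subtree. First I would fix the set of $k+1$ vertices that form $\MD(T)$; since the root of any tree in $\T_n$ is the largest label $n$ (because every edge of $\MD(T)$ must be decreasing from the root, and the root lies in $\MD(T)$), the vertex set of $\MD(T)$ is $\{n\}\cup A$ for some $k$-subset $A$ of $[0,n-1]$. Once $A$ is chosen, the decreasing tree structure on $\{n\}\cup A$ can be any increasing tree on $k+1$ nodes; the number of these is $k!$ (increasing trees on $k+1$ labeled nodes are counted by $k!$). The remaining $n-k$ vertices, whose label set I will call $B=[0,n-1]\setminus A$, must be attached so that no new decreasing edge extends $\MD(T)$: concretely, each vertex of $B$ that is a child of a vertex of $\MD(T)$ must have a label \emph{larger} than its parent, and within the forest hanging off $\MD(T)$ the edges may be arbitrary. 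The combinatorial heart is to count, for fixed $\MD(T)$-vertex set of size $k+1$, the number of ways to hang the $n-k$ other vertices. This is where I expect the main obstacle to lie, and the factor $(n-k)^{n-m-1}(m-k)$ suggests the bookkeeping must be organized carefully.

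To get the sum over $m$, I would introduce an intermediate parameter: let $m+1$ be the number of vertices that lie ``at distance controlled by'' $\MD(T)$ in the following sense — group the non-$\MD$ vertices by whether they attach directly to $\MD(T)$. More precisely, I expect the cleanest route is to sum over the size of the set $C$ consisting of $\MD(T)$ together with all vertices whose parent lies in $\MD(T)$ but which themselves begin a ``non-decreasing'' continuation; write $|C| = m+1$. The binomial ${n+1\choose m+1}$ then chooses an $(m+1)$-subset of $[0,n]$ containing the root, and $S(m+1,k+1)\,k!$ counts the ways to realize this as an increasing tree ($\MD(T)$, with $k+1$ vertices) together with a surjective-type assignment of the remaining $m-k$ of these vertices onto the $k+1$ slots of $\MD(T)$ — the Stirling number counting the set partition and $k!$ the increasing-tree structure. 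Each such attached vertex is the root of a subtree built freely on the leftover $n-m$ vertices, contributing a factor that, by a Cayley-type forest count, yields $(n-k)^{n-m-1}$; and the leftover factor $(m-k)$ accounts for which of the $m-k$ attachment points serves as the designated root of the pruned forest on the remaining labels. I would verify the exact shape of these factors by checking the extremal cases: $k=n$ forces $m=n$ and gives $\abs{\T_{n,n}} = n!$ (fully decreasing trees, i.e.\ increasing trees), and $k=0$ should reproduce the known count of $\T_{n,0}$ from \cite{CDG00}.

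The verification strategy I would actually carry out is this: set up the generating-function identity by letting $F(x)$ be the exponential generating function for the forests hanging off $\MD(T)$ with no decreasing edge to the root of each component, derive a functional equation for $F$, and extract coefficients; alternatively, prove the formula by a direct counting argument tracking the decomposition $T \mapsto (\MD(T), \text{attachment data}, \text{residual forest})$ and showing the map is a bijection onto triples counted by the summand. The step I expect to be genuinely delicate is pinning down the precise power $(n-k)^{n-m-1}$ with the linear correction $(m-k)$: this is the signature of counting rooted forests via $\sum_j \binom{a}{j} j\, b^{a-j-1}\cdots$ type identities (a disguised form of the Abel / Cayley identity $\sum \binom{a}{j} j (j)^{j-1} b^{a-j} = \dots$), so the crux will be recognizing the residual structure on the $n-k$ non-$\MD$ vertices as a rooted forest on $n-k$ nodes with $m-k$ roots, whose count is $\binom{n-k-1}{m-k-1}(m-k)(n-k)^{n-m-1}$ or an equivalent rearrangement, and then matching it term-by-term with the claimed summand. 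Once that identification is secured, the theorem follows by multiplying the independent choices and summing over $m$ from $k$ to $n$.
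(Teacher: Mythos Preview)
Your overall architecture --- introduce the set $C = V_1$ consisting of $\MD(T)$ together with all children of $\MD(T)$-vertices, sum over $|V_1|=m+1$, and recognize the residual forest on $[0,n]\setminus V_0$ with root set $V_1\setminus V_0$ as contributing $(m-k)(n-k)^{n-m-1}$ --- matches the paper's proof. But there is a genuine error and a missing idea.

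The error: the root of $T\in\T_n$ is \emph{not} forced to be $n$. Decreasing edges in $\MD(T)$ only force the root to be the maximum of the vertex set of $\MD(T)$, not of all of $[0,n]$; the paper's own Figure~1 has root $9$ on $[0,10]$. Your opening move of writing the $\MD$-vertex set as $\{n\}\cup A$ therefore fails, and it is also inconsistent with your later reading of $\binom{n+1}{m+1}$ as choosing a subset ``containing the root'' (if the root were fixed you would get $\binom{n}{m}$).

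The missing idea is what makes $S(m+1,k+1)$ appear cleanly. You propose choosing the $\MD$-vertex set first and then surjecting the other $m-k$ elements of $V_1$ onto it. The paper does the opposite: choose $V_1\subset[0,n]$ freely ($\binom{n+1}{m+1}$ ways), partition $V_1$ into $k+1$ unlabeled blocks ($S(m+1,k+1)$ ways), and then \emph{define} $V_0$ to be the set of block minima. The elements of each block other than its minimum become children of that minimum; because they exceed their parent, none of them extends $\MD(T)$, which is exactly the constraint you need. The decreasing tree on $V_0$ then contributes the factor $k!$, and its root (the maximum of $V_0$) is the root of $T$. This ``block-minimum'' device is the hinge of the bijection; without it you cannot freely choose $V_1$ and the partition independently, and your proposed order of choices would overcount or require awkward side conditions. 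Finally, the factor $(m-k)(n-k)^{n-m-1}$ is simply the count of rooted forests on a set of $n-k$ vertices with a prescribed set of $m-k$ roots (Stanley, Prop.~5.3.2); no ``designated root'' interpretation is needed.
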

\begin{proof}
Given a rooted labeled tree $T$, let $V_1$ be the union of the set of vertices in $\MD(T)$ and the set of children of any vertex in $\MD(T)$. Now, we will count the number of rooted labeled trees $T \in \T_{n,k}$ with $\abs{V_1}=m+1$.


First of all, the number of ways for selecting $V_1$ is equal to $n+1 \choose m+1$. Make a partition of $V_1$ into $k+1$ blocks, namely, $B_1, \dots , B_{k+1}$. The number of such partitions is equal to $S(m+1, k+1)$. Take the set $V_0$ consisting of the minimum $m_i$ of each block $B_i$. Make a decreasing subtree on $V_0 = \set{m_1, \dots, m_{k+1}}$.
Since it is well known that the number of (unordered) increasing trees on $k+1$ nodes is $k!$, 
there are exactly $k!$ ways of making a decreasing subtree on $V_0$.
Append vertices in $V_1\setminus V_0$ to this decreasing subtree such that elements in $B_i \setminus \set{m_i}$ are children of $m_i$ for $i=1,\dots,k+1$.
It is well-known that the number of forests $F$ on $[0,n]\setminus V_0$ such that $V_1\setminus V_0$ is the set of all roots of $F$ is equal to $(n-k)^{n-m-1}(m-k)$ (see \cite[Prop. 5.3.2]{Sta99}).
Since the range of $m$ is $k\le m \le n$,
$$\abs{\T_{n,k}} = \sum_{m=k}^n {n+1 \choose m+1} S(m+1,k+1) \, k! \, (n-k)^{n-m-1} (m-k).$$
\end{proof}

For a nonnegative integer $n$, let $\F_n$ be the set of functions from $[n]$ to $[n]$, where $[n]:=\set{1,\dots,n}$ if $n$ is positive integer and $[0]:=\emptyset$.
Let $\F_{n,k}$ be the set of functions $f \in \F_n$ with $[k] \subset f([n])$, where $f([n])$ is the image of $f$.

\begin{prop}\label{thm:Fn}
For nonnegative integers $n$ and $k$, we have
\begin{align}
\abs{\F_{n,k}} =& \sum_{m=k}^n {n \choose m} S(m,k) \, k! \, (n-k)^{n-m} \label{eq:F1}\\
=& \sum_{i\ge 0} (-1)^i\binom{k}{i}\,(n-i)^n. \label{eq:F2}
\end{align}
\end{prop}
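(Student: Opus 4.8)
The plan is to establish the two closed forms for $\abs{\F_{n,k}}$ by two separate and elementary counting arguments, and to note that their equality can, if desired, also be verified algebraically. Recall that $\F_{n,k}$ is the set of functions $f\colon[n]\to[n]$ whose image contains $[k]$.

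For \eqref{eq:F1}, I would classify each $f\in\F_{n,k}$ by the cardinality $m=\abs{f^{-1}([k])}$ of the set of elements mapped into $[k]$; since $[k]\subseteq f([n])$ forces $m\ge k$, and $m\le n$ trivially, the parameter $m$ runs from $k$ to $n$. To build such an $f$: first choose the preimage $f^{-1}([k])$, an $m$-subset of $[n]$, in $\binom{n}{m}$ ways; next choose the restriction of $f$ to this set, which must be a surjection onto $[k]$ and hence can be done in $S(m,k)\,k!$ ways; and finally choose the restriction of $f$ to the remaining $n-m$ elements, which may map arbitrarily into $[n]\setminus[k]$, a set of size $n-k$, in $(n-k)^{n-m}$ ways. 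Multiplying these counts and summing over $m$ gives \eqref{eq:F1}. (The degenerate cases $n=0$ and $k=0$ are immediate and need only a one-line check.)

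For \eqref{eq:F2}, I would apply inclusion--exclusion over the elements of $[k]$ that fail to be hit. For $j\in[k]$ set $A_j=\set{f\in\F_n:j\notin f([n])}$; then $\F_{n,k}=\F_n\setminus\bigcup_{j\in[k]}A_j$, and for any $I\subseteq[k]$ the intersection $\bigcap_{j\in I}A_j$ is precisely the set of maps $[n]\to[n]\setminus I$, so $\bigl|\bigcap_{j\in I}A_j\bigr|=(n-\abs{I})^n$. Inclusion--exclusion then yields $\abs{\F_{n,k}}=\sum_{I\subseteq[k]}(-1)^{\abs{I}}(n-\abs{I})^n=\sum_{i\ge0}(-1)^i\binom{k}{i}(n-i)^n$, which is \eqref{eq:F2}.

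I do not expect a genuine obstacle in this proposition; the only point meriting attention is to confirm that the two formulas coincide, which one can do directly: inserting the surjection identity $S(m,k)\,k!=\sum_{i\ge0}(-1)^i\binom{k}{i}(k-i)^m$ into \eqref{eq:F1} (noting that the terms with $m<k$ vanish, so the sum may be taken over all $m$ from $0$ to $n$) and interchanging the two summations, the inner sum $\sum_{m=0}^{n}\binom{n}{m}(k-i)^m(n-k)^{n-m}$ collapses by the binomial theorem to $(n-i)^n$, reproducing \eqref{eq:F2}. Hence either combinatorial count alone proves the statement, with this computation available as an internal consistency check.
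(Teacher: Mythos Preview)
Your proof is correct and follows essentially the same approach as the paper: both parts use the identical decomposition (classifying by the preimage $f^{-1}([k])$ for \eqref{eq:F1}, inclusion--exclusion over the missed values in $[k]$ for \eqref{eq:F2}). The additional algebraic consistency check you supply is not in the paper and is unnecessary, since both formulas already count the same set, but it is correct.
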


\begin{proof}
Let $f^{-1} ([k]) =A$ and $\abs{A}=m$. The number of  subsets $A$ of $[n]$ of size $m$  is equal to $n \choose m$. The function $f$ can be decomposed into a surjection from $A$ to $[k]$ with $S(m,k)\, k!$ ways and a function from $[n]\setminus A$ to $[n] \setminus [k]$ with $(n-k)^{n-m}$ ways.
Since $m$ runs through from $k$ to $n$, the formula~\eqref{eq:F1} holds.

Meanwhile, defining $A_j$ by the set $\set{f \in \F_n ~|~ f^{-1}(j) = \emptyset}$, we have
$$\F_{n,k} = \F_n \setminus \left( A_1 \cup \dots \cup A_{k} \right).$$
By the principle of inclusion and exclusion, we have
\begin{align*}
\abs{\F_{n,k}} &= \abs{\F_n} - \abs{A_1 \cup \dots \cup A_{k}} = \sum_{I\subset [k]} (-1)^{\abs{I}} (n-\abs{I})^n.
\end{align*}
So, the formula \eqref{eq:F2} holds.
\end{proof}

\begin{thm}\label{thm:eq}
For nonnegative integers $n$ and $k$, we have $\abs{\T_{n,k}} = \abs{\F_{n,k}}$, i.e.,
\begin{equation}\label{eq:T=F}
\sum_{m=k}^n {n+1 \choose m+1} S(m+1,k+1) \, k! \, (n-k)^{n-m-1} (m-k) = \sum_{i=0}^k (-1)^{i}\binom{k}{i}\,(n-i)^n.
\end{equation}
\end{thm}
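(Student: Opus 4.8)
The plan is to prove the identity \eqref{eq:T=F} directly; once it is in hand, $\abs{\T_{n,k}}=\abs{\F_{n,k}}$ is immediate, since the left side of \eqref{eq:T=F} is $\abs{\T_{n,k}}$ by Theorem~\ref{thm:tree} and its right side is $\abs{\F_{n,k}}$ by \eqref{eq:F2} of Proposition~\ref{thm:Fn}. Two bookkeeping conventions are adopted at the outset: the factor $(n-k)^{n-m-1}(m-k)$ at $m=n$ is read as its value $1$ (it is the number of forests on $n-k$ points all of which are singletons), and the case $n=k$ is checked separately (both sides equal $k!$). The first genuine step is to trade the Stirling number for a power sum: starting from the surjection formula $(k+1)!\,S(m+1,k+1)=\sum_{i=0}^{k+1}(-1)^i\binom{k+1}{i}(k+1-i)^{m+1}$, factor out one copy of $k+1-i$ and use $(k+1-i)\binom{k+1}{i}=(k+1)\binom{k}{i}$ to obtain
\[
k!\,S(m+1,k+1)=\sum_{i=0}^{k}(-1)^i\binom{k}{i}(k+1-i)^{m},
\]
an expression which, incidentally, vanishes for $0\le m\le k-1$.

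Substituting this into the left side of \eqref{eq:T=F} and interchanging the two finite summations rewrites it as $\sum_{i=0}^{k}(-1)^i\binom{k}{i}\,\Sigma_i$, where $\Sigma_i=\sum_{m=k}^{n}\binom{n+1}{m+1}(m-k)(n-k)^{n-m-1}(k+1-i)^{m}$. Because the Stirling coefficient is zero for $m<k$, the lower limit in $\Sigma_i$ may be lowered to $m=0$ without changing the alternating sum over $i$ (the added terms contribute $\sum_{i=0}^{k}(-1)^i\binom{k}{i}(k+1-i)^{m}=k!\,S(m+1,k+1)=0$ for $0\le m\le k-1$). After the substitution $p=m+1$, the identity $p\binom{n+1}{p}=(n+1)\binom{n}{p-1}$, and two applications of the binomial theorem, the extended sum collapses to
\[
\Sigma_i=\frac{(k+1)(n-k)^n-i\,(n+1-i)^n}{k+1-i}.
\]
Here the $m=n$ summand, which contributes $(k+1-i)^n$ once the boundary convention is applied, is exactly what is needed for the evaluation to close up, so it should be checked by hand.

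It remains to reassemble. Using $\binom{k}{i}/(k+1-i)=\binom{k+1}{i}/(k+1)$, the sum $\sum_{i=0}^{k}(-1)^i\binom{k}{i}\Sigma_i$ splits as $(n-k)^n\sum_{i=0}^{k}(-1)^i\binom{k+1}{i}$ minus $\frac{1}{k+1}\sum_{i=0}^{k}(-1)^i\,i\binom{k+1}{i}(n+1-i)^n$. The first sum equals $(-1)^k$, so the first piece is $(-1)^k(n-k)^n$. For the second, $i\binom{k+1}{i}=(k+1)\binom{k}{i-1}$ and the shift $j=i-1$ transform $-\frac{1}{k+1}\sum_{i=0}^{k}(-1)^i\,i\binom{k+1}{i}(n+1-i)^n$ into $\sum_{j=0}^{k-1}(-1)^j\binom{k}{j}(n-j)^n$. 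Since $(-1)^k(n-k)^n$ is exactly the $j=k$ term of $\sum_{j=0}^{k}(-1)^j\binom{k}{j}(n-j)^n$, adding the two pieces gives precisely the right side of \eqref{eq:T=F}.

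The binomial rewrites used --- $(k+1-i)\binom{k+1}{i}=(k+1)\binom{k}{i}$, $p\binom{n+1}{p}=(n+1)\binom{n}{p-1}$, $i\binom{k+1}{i}=(k+1)\binom{k}{i-1}$ --- and the binomial-theorem collapse of $\Sigma_i$ are all routine. I expect the main difficulty to be a matter of care rather than depth: the boundary term $m=n$ in the factor $(n-k)^{n-m-1}(m-k)$, and the case $n=k$, must be handled consistently, because the closed form for $\Sigma_i$ --- on which the whole argument rests --- is only valid once that term is read as stated above.
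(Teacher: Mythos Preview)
Your proof is correct and follows essentially the same route as the paper's: replace $k!\,S(m+1,k+1)$ by its inclusion--exclusion expression, interchange the sums over $m$ and the new index, collapse the inner sum with the binomial theorem, and then do a final index shift to land on the right side of \eqref{eq:T=F}. The only differences are cosmetic: the paper keeps the Stirling substitution in the form $\frac{1}{k+1}\sum_j(-1)^j\binom{k+1}{j}(k+1-j)^{m+1}$ and splits $m-k=(n-k)-(n-m)$, whereas you first rewrite it as $\sum_i(-1)^i\binom{k}{i}(k+1-i)^m$ and split $m-k=(m+1)-(k+1)$; and you are more explicit than the paper about the boundary term $m=n$ and the degenerate case $n=k$.
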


\begin{proof}
Since $S(m+1, k+1) (k+1)!$ is the number of surjective functions from $[m+1]$ to $[k+1]$, which is equal to $\sum_{j\ge0}(-1)^j\binom{k+1}{j}(k+1-j)^{m+1}$ by the principle of inclusion and exclusion, it follows that
\begin{align*}
\abs{\T_{n,k}}
=&\sum_{m} {n+1 \choose m+1} \left[\,S(m+1,k+1) \, k! \,\right]\, (n-k)^{n-m-1} (m-k)\\
=&\sum_{m} {n+1 \choose m+1} \left[\frac{1}{k+1}\sum_{j\ge0}(-1)^j\binom{k+1}{j}(k+1-j)^{m+1} \right]\, (n-k)^{n-m-1} (m-k).
\end{align*}
Separating the term $m-k$ to $(n-k)-(n-m)$ and changing the order of summations, we get
\begin{align*}
\abs{\T_{n,k}}
=&\sum_{j} \frac{(-1)^j}{k+1}\binom{k+1}{j}\sum_{m} \binom{n+1}{m+1}(k+1-j)^{m+1}(n-k)^{n-m}\\
& ~-~\sum_{j} \frac{(-1)^j}{k+1}\binom{k+1}{j}\sum_{m} (n+1)\binom{n}{m+1}(k+1-j)^{m+1}(n-k)^{n-m-1}.
\end{align*}
By the binomial theorem,
\begin{align*}
\abs{\T_{n,k}}
=&\sum_{j} \frac{(-1)^j}{k+1}\binom{k+1}{j}\left[\,(n+1-j)^{n+1} - (n+1)(n+1-j)^n \,\right]\\
=&\sum_{j} \frac{(-1)^j}{k+1}\binom{k+1}{j}\,(-j)(n+1-j)^n.
\end{align*}
Substituting $j=i+1$ in  the previous equation, the formula \eqref{eq:T=F} holds.


\end{proof}

\section{Properties} \label{sec:properties}

A rooted labeled tree $T$ is called a \emph{local minimum tree}, if $\MD(T)$ consists of a single vertex. Note that $\T_{n,0}$ is the set of local minimum trees on $[0,n]$ and  $\abs{\T_{n,0}}$ is equal to $n^n$ \cite{CDG00}. Also $\T_{n,n}$ is the set of decreasing trees on $[0,n]$ and $\abs{\T_{n,n}}$ is equal to $n!$.

Given $T \in \T_{n,k}$, we can decompose $T$ into $k+1$ local minimum trees by removing $k$ edges in $\MD(T)$. This decomposition yields the following lemma.

\begin{lem}\label{lemma:decompose}
For nonnegative integers $n$ and $k$, we have
\[
\abs{\T_{n,k}} =
\frac{1}{k+1} \sum_{n_1+\dots + n_{k+1}=n-k} {n+1 \choose n_1+1, \dots , n_{k+1}+1} {n_1}^{n_1} \cdots {n_{k+1}}^{n_{k+1}}
\]
\end{lem}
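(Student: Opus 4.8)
The plan is to promote the decomposition described just before the lemma into an honest bijection and then count both sides of it. Let $\mathcal{V}_{n,k}$ be the set of pairs $\bigl(\set{U_1,\dots,U_{k+1}},\sigma\bigr)$ in which $U_1,\dots,U_{k+1}$ are local minimum trees whose vertex sets partition $[0,n]$ and $\sigma$ is a decreasing tree on the $(k+1)$-element set $\set{r_1,\dots,r_{k+1}}$ of their roots. I would first show that the map sending $T\in\T_{n,k}$ to the pair obtained by deleting the $k$ edges of $\MD(T)$ --- recording the induced tree $\sigma$ on $V(\MD(T))$ and the $k+1$ components (each of which meets $V(\MD(T))$ in exactly one vertex, since $\MD(T)$ is a connected subtree, and is rooted there) --- is a bijection onto $\mathcal{V}_{n,k}$.

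The verification splits into two parts. For well-definedness I would argue that each component $U_i$, rooted at a vertex $v\in V(\MD(T))$, is a local minimum tree: any child of $v$ in $U_i$ is a child of $v$ in $T$ not lying in $\MD(T)$, hence by maximality of $\MD(T)$ it has label larger than $v$; so $\MD(U_i)=\set{v}$. For the inverse, given $\bigl(\set{U_1,\dots,U_{k+1}},\sigma\bigr)$, I would glue each $U_i$ onto $\sigma$ by identifying its root $r_i$ with the corresponding vertex of $\sigma$, obtaining a tree $T$ on $[0,n]$, and check that $\MD(T)=\sigma$: the edges of $\sigma$ are decreasing while every edge leaving a root $r_i$ into the body of $U_i$ is increasing, so the maximal decreasing subtree from the root of $T$ cannot escape $\sigma$, forcing $\MD(T)=\sigma$, which has $k+1$ vertices, i.e.\ $T\in\T_{n,k}$. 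These two constructions are manifestly mutually inverse, so $\abs{\T_{n,k}}=\abs{\mathcal{V}_{n,k}}$.

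It then remains to count $\mathcal{V}_{n,k}$. Building an element amounts to choosing the unordered family $\set{U_1,\dots,U_{k+1}}$ and then a decreasing tree $\sigma$ on its roots; by the fact quoted in the proof of Theorem~\ref{thm:tree}, once the $k+1$ roots are fixed there are exactly $k!$ such $\sigma$. To count unordered families I would instead count ordered tuples $(U_1,\dots,U_{k+1})$: distribute $[0,n]$ into ordered blocks of sizes $n_1+1,\dots,n_{k+1}+1$ with $n_i\ge 0$ and $\sum_i n_i=n-k$ in $\binom{n+1}{n_1+1,\dots,n_{k+1}+1}$ ways, then place a local minimum tree on block $i$ in $n_i^{n_i}$ ways (using $\abs{\T_{n_i,0}}=n_i^{n_i}$, with the convention $0^0=1$ accounting for a one-vertex block), and sum over all compositions $n_1+\dots+n_{k+1}=n-k$. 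Since the $U_i$ have pairwise disjoint nonempty vertex sets, each unordered family arises from exactly $(k+1)!$ ordered tuples, so there are $\frac{1}{(k+1)!}\sum_{n_1+\dots+n_{k+1}=n-k}\binom{n+1}{n_1+1,\dots,n_{k+1}+1}\,n_1^{n_1}\cdots n_{k+1}^{n_{k+1}}$ families; multiplying by the $k!$ choices of $\sigma$ yields the stated formula. (As a sanity check, $k=0$ gives $n^n$ and $k=n$ gives $n!$.)

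I expect the crux to be the well-definedness of the bijection, that is, the two "maximality" checks --- that deleting the edges of $\MD(T)$ really produces local minimum trees, and that the regluing does not inadvertently enlarge the maximal decreasing subtree past $\sigma$ --- together with handling the $n_i=0$ (equivalently $0^0=1$) cases so that single-vertex components and the leaves of $\sigma$ are counted correctly. The purely enumerative step, converting between ordered tuples and unordered families and folding in the $k!$ decreasing-tree shapes, is then routine.
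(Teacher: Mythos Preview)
Your proposal is correct and follows essentially the same approach as the paper: decompose $T$ into local minimum subtrees by cutting the edges of $\MD(T)$, count ordered tuples of local minimum trees with vertex sets partitioning $[0,n]$, divide by $(k+1)!$ to pass to unordered families, and multiply by the $k!$ decreasing trees on the roots. You are somewhat more explicit than the paper about verifying the bijection (that the components really are local minimum trees and that the regluing has $\MD(T)=\sigma$), but the enumerative skeleton is identical.
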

\begin{proof}
It is enough to show the following formula
\begin{equation}\label{eq:decompose}
\abs{\T_{n,k}} =
k! \left( \frac{1}{(k+1)!} \sum_{n_1+\dots + n_{k+1}=n-k} {n+1 \choose n_1+1, \dots , n_{k+1}+1} \abs{\T_{n_1,0}} \cdots \abs{\T_{n_{k+1},0}} \right).
\end{equation}

First of all, we will make a tuple $(T_1,\dots, T_{k+1})$ of local minimum trees satisfying two conditions:
\begin{enumerate}[(i)]
\item \label{item:labelpartition}
The tuple $(L(T_1),\dots, L(T_{k+1}))$ is an ordered partition of the set $[0,n]$ and
\item \label{item:ordering}
$\rt(T_1) < \rt(T_1) < \dots < \rt(T_{k+1}),$
\end{enumerate}
where $L(T)$ means a set of labels of vertices in $T$ and $\rt(T)$ a label of the root of $T$.

Consider a tuple $(S_1,\dots, S_{k+1})$ of local minimum trees with the only condition \eqref{item:labelpartition}. For a given sequence $n_1, \dots, n_{k+1}$ of nonnegative integers with $n_1+\dots + n_{k+1}=n-k,$
the number of tuples $(S_1,\dots, S_{k+1})$ with $\abs{L(S_i)}=n_i+1$ is equal to
$${n+1 \choose n_1+1, \dots , n_{k+1}+1} \abs{\T_{n_1,0}} \cdots \abs{\T_{n_{k+1},0}}.$$
So the number of all tuples $(S_1,\dots, S_{k+1})$ with the condition \eqref{item:labelpartition} is equal to
$$\sum_{n_1+\dots + n_{k+1}=n-k} {n+1 \choose n_1+1, \dots , n_{k+1}+1} \abs{\T_{n_1,0}} \cdots \abs{\T_{n_{k+1},0}}.$$
From the condition \eqref{item:ordering}, the number of all tuples $(T_1,\dots, T_{k+1})$ is equal to
$$\frac{1}{(k+1)!} \sum_{n_1+\dots + n_{k+1}=n-k} {n+1 \choose n_1+1, \dots , n_{k+1}+1} \abs{\T_{n_1,0}} \cdots \abs{\T_{n_{k+1},0}}.$$
Since the number of decreasing subtrees on $\set{\rt(T_1),\dots,\rt(T_{k+1})}$ is $k!$, we get the formula~\eqref{eq:decompose}.
\end{proof}

From Lemma~\ref{lemma:decompose}, we can deduce the following result.
\begin{thm}
We have three exponential generating functions:
\begin{align}
1+ \sum_{n\ge 0} \sum_{k=0}^n \abs{\T_{n,k}} \frac{t^{k+1}}{k!} \frac{x^{n+1}}{(n+1)!} &= \exp\left( t\, \sum_{n\ge 0} n^n\frac{x^{n+1}}{(n+1)!} \right), \label{eq:gen1}\\
1+ \sum_{n\ge 0} \sum_{k=0}^n \abs{\T_{n,k}} (k+1) t^{k+1} \frac{x^{n+1}}{(n+1)!} &= \left(1-t\, \sum_{n\ge 0} n^n\frac{x^{n+1}}{(n+1)!}\right)^{-1}, \label{eq:gen2} \\
\sum_{n\ge 0} \sum_{k=0}^n \abs{\T_{n,k}} t^{k+1} \frac{x^{n+1}}{(n+1)!} &= - \ln\left(1-t\, \sum_{n\ge 0} n^n\frac{x^{n+1}}{(n+1)!}\right). \label{eq:gen3}
\end{align}
\end{thm}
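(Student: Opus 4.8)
The plan is to derive the entire statement from one auxiliary series. Put
\[
g(x) := \sum_{n\ge 0} n^n\,\frac{x^{n+1}}{(n+1)!},
\]
which is precisely the series appearing on the right-hand side of each of \eqref{eq:gen1}, \eqref{eq:gen2} and \eqref{eq:gen3}. The heart of the matter is to recognize that Lemma~\ref{lemma:decompose} is just a reformulation of the identity
\begin{equation}
\tag{$\star$}
g(x)^{k+1} = \sum_{n\ge 0}(k+1)\,\abs{\T_{n,k}}\,\frac{x^{n+1}}{(n+1)!}\qquad(k\ge 0).
\end{equation}
Granting $(\star)$, all three generating-function identities come out by multiplying by a suitable function of $k$ and summing over $k\ge 0$.

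First I would prove $(\star)$. By the convolution rule for exponential generating functions, the coefficient of $\frac{x^{n+1}}{(n+1)!}$ in $g(x)^{k+1}$ is the sum of $\binom{n+1}{n_1+1,\dots,n_{k+1}+1}\,n_1^{n_1}\cdots n_{k+1}^{n_{k+1}}$ over all $(k+1)$-tuples of nonnegative integers with $(n_1+1)+\dots+(n_{k+1}+1)=n+1$; since that constraint is the same as $n_1+\dots+n_{k+1}=n-k$, this coefficient is exactly the sum evaluated in Lemma~\ref{lemma:decompose}, namely $(k+1)\,\abs{\T_{n,k}}$. When $n<k$ the index set is empty, so both sides of $(\star)$ vanish in agreement with $\abs{\T_{n,k}}=0$; hence $n$ may range over all nonnegative integers, equivalently the pair $(n,k)$ over $0\le k\le n$ as in the statement. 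This is the only step carrying combinatorial content, and it merely transcribes the decomposition of $T\in\T_{n,k}$ into $k+1$ local minimum trees glued along a decreasing subtree on their roots: the normalization $\tfrac{x^{n+1}}{(n+1)!}$ and the prefactor $\tfrac{1}{k+1}$ are exactly what turns that decomposition's convolution over block sizes into a clean $(k+1)$st power.

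It then remains to sum $(\star)$ over $k\ge 0$. Multiplying by $\frac{t^{k+1}}{(k+1)!}$ and summing gives $\sum_{k\ge 0}\frac{(t\,g(x))^{k+1}}{(k+1)!}=\exp\bigl(t\,g(x)\bigr)-1$ on the left and $\sum_{n\ge 0}\sum_{k=0}^{n}\abs{\T_{n,k}}\,\frac{t^{k+1}}{k!}\,\frac{x^{n+1}}{(n+1)!}$ on the right, which is \eqref{eq:gen1}. Multiplying instead by $t^{k+1}$ and summing yields the geometric series $\sum_{k\ge 0}(t\,g(x))^{k+1}=\bigl(1-t\,g(x)\bigr)^{-1}-1$, which is \eqref{eq:gen2}. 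Multiplying by $\frac{t^{k+1}}{k+1}$ and summing yields $\sum_{k\ge 0}\frac{(t\,g(x))^{k+1}}{k+1}=-\ln\bigl(1-t\,g(x)\bigr)$, which is \eqref{eq:gen3}. I expect no real obstacle here: once $(\star)$ is in hand, the rest is just the Taylor expansions of $\exp z$, $(1-z)^{-1}$ and $-\ln(1-z)$ evaluated at $z=t\,g(x)$.
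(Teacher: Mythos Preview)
Your proposal is correct and follows essentially the same route as the paper: both arguments feed Lemma~\ref{lemma:decompose} into the EGF convolution machinery, with $G(x)=\sum_{n\ge 0} n^n \frac{x^{n+1}}{(n+1)!}$, and then read off the three identities from the Taylor series of $\exp z$, $(1-z)^{-1}$ and $-\ln(1-z)$ at $z=tG(x)$. The only cosmetic difference is that you isolate the intermediate identity $(\star)$ explicitly, whereas the paper packages the same step as an appeal to the compositional formula for exponential structures.
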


\begin{proof}
From Lemma~\ref{lemma:decompose}, left-hand side of three formulas become
\begin{align}
&1+ \sum_{n\ge 0} \sum_{k=0}^n \sum_{n_i} {n+1 \choose n_1+1, \dots , n_{k+1}+1} {n_1}^{n_1} \cdots {n_{k+1}}^{n_{k+1}} \frac{t^{k+1}}{(k+1)!} \frac{x^{n+1}}{(n+1)!}, \label{eq:change1}\\
&1+ \sum_{n\ge 0} \sum_{k=0}^n \sum_{n_i} {n+1 \choose n_1+1, \dots , n_{k+1}+1} {n_1}^{n_1} \cdots {n_{k+1}}^{n_{k+1}} t^{k+1} \frac{x^{n+1}}{(n+1)!}, \label{eq:change2}\\
&\sum_{n\ge 0} \sum_{k=0}^n \sum_{n_i} {n+1 \choose n_1+1, \dots , n_{k+1}+1} {n_1}^{n_1} \cdots {n_{k+1}}^{n_{k+1}} \frac{t^{k+1}}{k+1} \frac{x^{n+1}}{(n+1)!}, \label{eq:change3}
\end{align}
where $n_i$ means $n_1+\dots + n_{k+1}=n-k$.
Using the compositional formula for exponential structures \cite[Theorem 5.5.4]{Sta99},
three formulas are of form $F(t\,G(x))$ where $$G(x)=\sum_{n\ge 0} n^n \frac{x^{n+1}}{(n+1)!}.$$
In case \eqref{eq:change1}, the corresponding $F(x)$ is given by
$$F(x)= 1+\sum_{i\ge 0} \frac{(i+1)!}{(i+1)!} \, \frac{x^{i+1}}{(i+1)!} = \exp(x).$$
In case \eqref{eq:change2}, the corresponding $F(x)$ is given by
$$F(x)= 1+\sum_{i\ge 0} \frac{(i+1)!}{1} \, \frac{x^{i+1}}{(i+1)!} = \frac{1}{1-x}.$$
In case \eqref{eq:change3}, the corresponding $F(x)$ is given by
$$F(x)= \sum_{i \ge 0} \frac{(i+1)!}{(i+1)} \, \frac{x^{i+1}}{(i+1)!} =\ln \frac{1}{1-x}. $$
These complete the proof.
\end{proof}

By definition of $\T_{n,k}$, we have
\begin{equation}\label{eq:def}
\sum_{k=0}^n \abs{\T_{n,k}} = (n+1)^n,
\end{equation}
which can be also induced from $t=1$ in \eqref{eq:gen3}.
Similarly, putting $t=1$ in \eqref{eq:gen2} and applying the equation (5.67) in \cite{Sta99}, we get
\begin{align*}
1+ \sum_{n\ge 0} \sum_{k=0}^n \abs{\T_{n,k}} (k+1) \frac{x^{n+1}}{(n+1)!}
&= \left(1- \sum_{n\ge 0} n^n \frac{x^{n+1}}{(n+1)!}\right)^{-1} \\
&= 1+ \sum_{n\ge 0} (n+2)^n \frac{x^{n+1}}{(n+1)!}.
\end{align*}
Thus we have
\begin{equation}\label{eq:simple}
\sum_{k=0}^n (k+1) \abs{\T_{n,k}} = (n+2)^n.
\end{equation}
From \eqref{eq:def} and \eqref{eq:simple}, we are able to deduce the followings.

\begin{thm}\label{thm:T}
For a nonnegative integers $n$, $k$, and $\alpha$, we have
\begin{equation}\label{eq:recur}
\sum_{k=0}^n {k+\alpha \choose \alpha} \abs{\T_{n,k}} = (n+1+\alpha)^n.
\end{equation}
\end{thm}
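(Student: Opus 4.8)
The plan is to collapse \eqref{eq:recur} into a single identity between formal power series in $x$ and then recognize the right-hand side via the tree function. Write $G(x)=\sum_{n\ge0}n^n\frac{x^{n+1}}{(n+1)!}$, as in \eqref{eq:gen1}--\eqref{eq:gen3}. The crux is that \eqref{eq:gen1} already encodes the numbers $\abs{\T_{n,k}}$ through the powers of $G$: expanding $\exp(tG(x))=\sum_{j\ge0}t^{j}G(x)^{j}/j!$ and comparing the coefficients of $t^{k+1}$ on the two sides of \eqref{eq:gen1} gives
\[
G(x)^{k+1}=(k+1)\sum_{n\ge0}\abs{\T_{n,k}}\frac{x^{n+1}}{(n+1)!}\qquad(k\ge0).
\]
Multiplying the left side of \eqref{eq:recur} by $\frac{x^{n+1}}{(n+1)!}$, summing over $n$, and interchanging the two sums (only finitely many terms contribute to each power of $x$), I get
\[
\sum_{n\ge0}\left(\sum_{k=0}^n\binom{k+\alpha}{\alpha}\abs{\T_{n,k}}\right)\frac{x^{n+1}}{(n+1)!}
=\sum_{k\ge0}\binom{k+\alpha}{\alpha}\frac{G(x)^{k+1}}{k+1}=\psi_\alpha\!\left(G(x)\right),
\]
where $\psi_\alpha(w):=\sum_{k\ge0}\binom{k+\alpha}{\alpha}\frac{w^{k+1}}{k+1}$. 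So it remains to show $\psi_\alpha(G(x))=\sum_{n\ge0}(n+1+\alpha)^n\frac{x^{n+1}}{(n+1)!}$ and then to compare coefficients of $\frac{x^{n+1}}{(n+1)!}$.

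Next I would put $\psi_\alpha$ in closed form. Since $\psi_\alpha'(w)=\sum_{k\ge0}\binom{k+\alpha}{\alpha}w^k=(1-w)^{-\alpha-1}$ and $\psi_\alpha(0)=0$, term-by-term integration gives $\psi_0(w)=-\ln(1-w)$ and $\psi_\alpha(w)=\frac{1}{\alpha}\bigl((1-w)^{-\alpha}-1\bigr)$ for $\alpha\ge1$. Let $T(x)=\sum_{m\ge1}m^{m-1}\frac{x^m}{m!}=\sum_{n\ge0}(n+1)^n\frac{x^{n+1}}{(n+1)!}$ denote the tree function, $T=xe^{T}$. Setting $t=1$ in \eqref{eq:gen3} and using \eqref{eq:def} gives $T(x)=-\ln(1-G(x))$, that is,
\[
1-G(x)=e^{-T(x)}.
\]
For $\alpha=0$ this already finishes the job: $\psi_0(G(x))=-\ln(1-G(x))=T(x)=\sum_{n\ge0}(n+1)^n\frac{x^{n+1}}{(n+1)!}$, which is \eqref{eq:recur} at $\alpha=0$ (i.e.\ \eqref{eq:def}).

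For $\alpha\ge1$, substituting $1-G(x)=e^{-T(x)}$ into the closed form yields $\psi_\alpha(G(x))=\frac{1}{\alpha}\bigl(e^{\alpha T(x)}-1\bigr)$, and I would finish by invoking the classical expansion $e^{cT(x)}=\sum_{m\ge0}c(c+m)^{m-1}\frac{x^m}{m!}$ (a form of Abel's binomial theorem; cf.\ \cite{Sta99}). Reindexing with $m=n+1$ turns this into $e^{\alpha T(x)}=1+\alpha\sum_{n\ge0}(n+1+\alpha)^n\frac{x^{n+1}}{(n+1)!}$, hence $\psi_\alpha(G(x))=\sum_{n\ge0}(n+1+\alpha)^n\frac{x^{n+1}}{(n+1)!}$, and reading off the coefficient of $\frac{x^{n+1}}{(n+1)!}$ gives \eqref{eq:recur}. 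The only ingredient that is not formal power-series bookkeeping is the classical evaluation of $e^{cT}$; the one point that needs care is to treat $\alpha=0$ on its own, since there $\psi_0$ is a logarithm rather than a negative power.
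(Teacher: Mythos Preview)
Your argument is correct but follows a genuinely different route from the paper. The paper's proof is bijective: it invokes $\abs{\T_{n,k}}=\abs{\F_{n,k}}$ from Theorem~\ref{thm:eq}, introduces the set $\H_{n,k,\alpha}$ of functions $h\colon[n]\to[-\alpha,n]$ for which $k$ is the $(\alpha+1)$-st element of $[-\alpha,n]$ missing from the image, and builds an explicit bijection $\A\times\F_{n,k}\to\H_{n,k,\alpha}$ (with $\abs{\A}=\binom{k+\alpha}{\alpha}$); summing over $k$ then gives $(n+1+\alpha)^n$ because every function $[n]\to[-\alpha,n]$ lies in exactly one $\H_{n,k,\alpha}$. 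You instead extract $G(x)^{k+1}/(k+1)=\sum_{n}\abs{\T_{n,k}}\,x^{n+1}/(n+1)!$ from \eqref{eq:gen1}, sum against $\binom{k+\alpha}{\alpha}$, and evaluate the resulting series through $1-G=e^{-T}$ (obtained from \eqref{eq:gen3} at $t=1$) together with the Abel expansion of $e^{\alpha T}$. Your approach is shorter and stays entirely within the generating-function framework of Section~\ref{sec:properties}, at the cost of importing one classical identity and having to split off the case $\alpha=0$; the paper's approach treats all $\alpha\ge0$ uniformly and, more importantly, furnishes a concrete combinatorial meaning for each summand $\binom{k+\alpha}{\alpha}\abs{\T_{n,k}}$.
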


\begin{proof}

Since we have proved $\abs{\T_{n,k}} = \abs{\F_{n,k}}$ in Theorem~\ref{thm:eq}, it is enough to show
\begin{equation}\label{eq:recurT}
\sum_{k=0}^n {k+\alpha \choose \alpha} \abs{\F_{n,k}} = (n+1+\alpha)^n.
\end{equation}

For $\alpha=0$,
let $\G_{n,k}$ be the set of functions $g$ from $[n]$ to $[0,n]$ with $[0, k-1] \subset g([n])$ but $k \not \in g([n])$.
By definition of $\G_{n,k}$,
\[
\sum_{k=0}^n \abs{\G_{n,k}} = (n+1)^n.
\]
There is a simple bijection $\varphi$ from $\F_{n,k}$ to $\G_{n,k}$ as follows:
Given a $f \in \F_{n,k}$, consider a function $g$ from $[n]$ to $[0,n]$ defined by
\[
g(i) =
\begin{cases}
f(i)-1 & \text{if $f(i)\le k$}\\
f(i) & \text{otherwise.}
\end{cases}
\]
Since the images of $g$ includes $0, \dots, k-1$ but does not include $k$, the function $g$ belongs to $\G_{n,k}$ and $\varphi(f)=g$ is well-defined.
Since $\varphi$ is reversible, it is a bijection. So it holds that
\[
\abs{\F_{n,k}} = \abs{\G_{n,k}}
\]
for all $0\le k \le n$.

For $\alpha>0$, let $\H_{n,k,\alpha}$ be the set of functions $g$ from $[n]$ to $[-\alpha,n]$ with $$\abs{[-\alpha, k-1] \setminus g([n])}=\alpha$$ and $k \not \in g([n])$, where $[a,b]:=\set{a,\dots,b}$.
For every function $h$ from $[n]$ to $[-\alpha, n]$,
since the cardinality of the domain is less than the cardinality of the codomain by $\alpha+1$,  there exists a unique $k$ satisfying above conditions. Note that $k$ is the $(\alpha+1)$-st element in $[-\alpha,n] \setminus g([n])$. Thus
\[
\sum_{k=0}^n \abs{\H_{n,k,\alpha}} = (n+1+\alpha)^n.
\]\
Let $\A$ be the set of all $\alpha$-elements subsets of $[-\alpha, k-1]$.
Clearly, $\abs{\A} = {k+\alpha \choose \alpha}$.
There is a bijection from $\A \times \F_{n,k}$ to $\H_{n,k,\alpha}$ as follows:
For a given $(A,f) \in \A \times \F_{n,k}$, we make a $(A,\varphi(f)) \in \A \times \G_{n,k}$.
Consider the order-preserving bijection $\sigma$ from $[0,n]$ to $[-\alpha, n]\setminus A$. Then we can define the function $h$ from $[n]$ to $[-\alpha,n]$ by
$$h = \sigma\circ(\varphi(f))$$
and this function $h$ is contained in  $\H_{n,k,\alpha}$.
Hence,
\[
\abs{\H_{n,k,\alpha}} = \abs{\A\times \F_{n,k}} = \abs{\A}\cdot\abs{\F_{n,k}} = {k+\alpha \choose \alpha}\abs{\F_{n,k}}
\]
for all $0\le k \le n$.
\end{proof}

For example, let $n=5$, $k=2$, and $\alpha=3$.
Take $A = \set{-2,-1,1} \in {[-3,1] \choose 3}$ and $f = (f(1), \dots, f(5)) = (5,2,1,3,2) \in \F_{5,2}$.
Then $g=\varphi(f)  \in \G_{5,2}$ and $h=\sigma\circ(\varphi(f)) \in \H_{5,2,3}$ are given by
\begin{align*}
(g(1),\dots, g(5)) &= (5,1,0,3,1),\\
(h(1),\dots, h(5)) &= (5,0,-3,3,0).
\end{align*}

Let us consider the equation \eqref{eq:recur} or \eqref{eq:recurT} for a negative integer $\alpha$. In fact, the left hand sides of these equations are not well-defined even for $\alpha=-1$, nevertheless the right hand sides are. Here we find, however, the coefficients of $\abs{\T_{n,k}}$ that can replace the term $k-1 \choose -1$ as follows.

\begin{thm}\label{thm:remark}
For a positive integer $n$, we have
\begin{equation}\label{eq:div}
\sum_{k=1}^n \frac{1}{k}\abs{\T_{n,k}} = \sum_{k=1}^n \frac{1}{k}\abs{\F_{n,k}} =n^n.
\end{equation}
\end{thm}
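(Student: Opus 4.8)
The plan is to derive this identity from the generating function \eqref{eq:gen3} by integrating with respect to $t$ and then setting $t=1$, in parallel with how \eqref{eq:def} and \eqref{eq:simple} were extracted from \eqref{eq:gen3} and \eqref{eq:gen2}. Observe that the coefficient $\frac{1}{k}$ suggests dividing the summand $\abs{\T_{n,k}}$ — whose natural generating variable carries $t^{k+1}$ — and matching it against a generating series in which $t^{k+1}$ is replaced by something producing a factor $\frac{1}{k}$. Concretely, starting from $-\ln(1-t\,G(x)) = \sum_{n,k}\abs{\T_{n,k}} t^{k+1}\frac{x^{n+1}}{(n+1)!}$ with $G(x)=\sum_{n\ge0}n^n\frac{x^{n+1}}{(n+1)!}$, one has $\frac{\partial}{\partial t}\!\left[-\ln(1-tG(x))\right] = \frac{G(x)}{1-tG(x)}$. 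Integrating instead the combination that introduces $\frac1k$: I would consider $\int_0^1 \frac{-\ln(1-tG(x))}{t}\,dt$, whose $t$-expansion term by term sends $t^{k+1}$ to $\frac{1}{k+1}$, not quite $\frac1k$; so instead I would work with the shifted series and use the substitution matching the index as in Theorem~\ref{thm:T}, namely rewrite $\frac1k\abs{\T_{n,k}}$ for $k\ge1$ and sum.

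The cleaner route, which I would actually carry out, is the purely combinatorial/algebraic one using $\abs{\T_{n,k}}=\abs{\F_{n,k}}$ (Theorem~\ref{thm:eq}) and the inclusion–exclusion form \eqref{eq:F2}: we have
\[
\sum_{k=1}^{n}\frac1k\abs{\F_{n,k}}
=\sum_{k=1}^{n}\frac1k\sum_{i=0}^{k}(-1)^i\binom{k}{i}(n-i)^n
=\sum_{i=0}^{n}(-1)^i(n-i)^n\sum_{k=\max(i,1)}^{n}\frac1k\binom{k}{i},
\]
after swapping the order of summation. The key step is then the inner sum $\sum_{k}\frac1k\binom ki$. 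Using $\frac1k\binom ki=\frac1i\binom{k-1}{i-1}$ for $i\ge1$ and the hockey-stick identity $\sum_{k=i}^{n}\binom{k-1}{i-1}=\binom{n}{i}$, the $i\ge1$ terms collapse to $\frac1i\binom ni$; for $i=0$ the inner sum is the harmonic number $H_n=\sum_{k=1}^n\frac1k$. Thus the whole expression becomes
\[
n^n H_n+\sum_{i=1}^{n}(-1)^i(n-i)^n\frac1i\binom ni
= n^n H_n-\sum_{i=1}^{n}(-1)^{i-1}\binom ni\frac{(n-i)^n}{i}.
\]
So it remains to prove the finite identity $\sum_{i=1}^{n}(-1)^{i-1}\binom ni\frac{(n-i)^n}{i}=(H_n-1)\,n^n$, equivalently $\sum_{i=1}^{n}\frac{(-1)^{i-1}}{i}\binom ni(n-i)^n=(H_n-1)n^n$.

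I expect this last finite-sum identity to be the main obstacle, and I would handle it by a finite-differences argument. Writing $\frac1i=\int_0^1 u^{i-1}\,du$ turns the left side into $\int_0^1\frac1u\sum_{i=1}^n(-1)^{i-1}\binom ni u^i (n-i)^n\,du$; recognizing $\sum_{i=0}^n(-1)^i\binom ni(n-i)^n = n!$ (the $n$-th finite difference of $x^n$) and more generally analyzing $\sum_i(-1)^i\binom ni u^i(n-i)^n$ as a polynomial in $u$ whose value at $u=1$ is $(-1)^n n!$ and whose constant term is $n^n$, I would extract the needed integral. Alternatively — and this is the version I would write up, since it avoids integrals — I would prove $\sum_{k=1}^n\frac1k\abs{\T_{n,k}} = n^n$ directly from \eqref{eq:simple} and \eqref{eq:def} together with the generating function \eqref{eq:gen2}: setting $u = G(x)$ and noting $\sum_k(k+1)\abs{\T_{n,k}}t^{k+1}$ is the $t\frac{d}{dt}$ of $\sum_k\abs{\T_{n,k}}t^{k+1}$, one gets $\sum_k\frac1{k}\abs{\T_{n,k}}$ as a coefficient in $\int \big(\text{series}\big)\frac{dt}{t}$ evaluated suitably; matching $[x^{n+1}/(n+1)!]$ against the known expansion $(1-G(x))^{-1} = 1+\sum_n (n+2)^n\frac{x^{n+1}}{(n+1)!}$ and $-\ln(1-G(x)) = \sum_n(\sum_k\abs{\T_{n,k}})\frac{x^{n+1}}{(n+1)!}$ yields the claim once the telescoping $\sum_{k=1}^n\frac1k\abs{\T_{n,k}} = \sum_{k=0}^n\abs{\T_{n,k}} - \sum_{k=1}^n\frac{1}{k}\big((k+1)\abs{\T_{n,k}}-\abs{\T_{n,k}}\big)\big/1$ is arranged correctly; the arithmetic here is routine but must be done carefully, so I would present the finite-differences proof of the clean identity $\sum_{i=1}^n\frac{(-1)^{i-1}}{i}\binom ni(n-i)^n = (H_n-1)n^n$ as the honest core of the argument.
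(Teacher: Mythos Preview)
Your reduction is correct up to the point where you obtain
\[
\sum_{k=1}^n \frac{1}{k}\abs{\F_{n,k}} \;=\; n^n H_n \;+\; \sum_{i=1}^{n}\frac{(-1)^i}{i}\binom{n}{i}(n-i)^n,
\]
but then the argument stops: you correctly flag the remaining identity $\sum_{i=1}^{n}\frac{(-1)^{i-1}}{i}\binom{n}{i}(n-i)^n=(H_n-1)\,n^n$ as the ``main obstacle'' and do not prove it. The integral sketch does not determine the answer --- knowing that the polynomial $P(u)=\sum_i(-1)^i\binom{n}{i}u^i(n-i)^n$ satisfies $P(0)=n^n$ and $P(1)=n!$ (not $(-1)^n n!$, incidentally) tells you nothing about $\int_0^1\frac{n^n-P(u)}{u}\,du$ for a degree-$n$ polynomial --- and the generating-function paragraph is too vague to be an argument; the displayed ``telescoping'' line is not a meaningful manipulation.

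The paper uses a different order of operations that avoids the obstacle entirely. Rather than summing over $k$ first (which is what introduces the harmonic number $H_n$), it expands $(n-i)^n=\sum_{j}\binom{n}{j}n^{n-j}(-i)^j$ \emph{before} touching the $k$-sum, and then splits according to $j$. For $j=0$ one has $\sum_i(-1)^i\binom{k}{i}=0$ since $k\ge1$; for $j=1$, the identity $\frac{i}{k}\binom{k}{i}=\binom{k-1}{i-1}$ collapses everything to $n^n$; for $j\ge2$, after the hockey-stick $\sum_{k=i}^{n}\binom{k-1}{i-1}=\binom{n}{i}$ one is left with $\sum_{i}(-1)^i\binom{n}{i}i^{\,j-1}$, which vanishes because up to sign it counts surjections from a $(j{-}1)$-set onto an $n$-set with $j-1<n$. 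No $H_n$ ever appears. In fact, applying exactly this binomial expansion of $(n-i)^n$ to your leftover identity would complete your argument as well --- so the missing ingredient is precisely that expansion together with the surjection-counting observation for the $j\ge2$ terms.
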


\begin{proof}
From Proposition~\ref{thm:Fn} and Theorem~\ref{thm:eq}, expanding $(n-i)^n$ by the binomial theorem, we have
\begin{align*}
\abs{\T_{n,k}} = \abs{\F_{n,k}} &= \sum_{i\ge 0} (-1)^i\binom{k}{i}\,(n-i)^n\\
&= \sum_{i\ge 0} (-1)^i\binom{k}{i} \sum_{j \ge 0} \binom{n}{j} n^{n-j} (-i)^j.
\end{align*}
Hence the left hand side of \eqref{eq:div} is
$$
\sum_{k=1}^n \frac{1}{k}\abs{\T_{n,k}}
= \sum_{j \ge 0} \sum_{k=1}^n \sum_{i\ge 0}  \frac{1}{k} (-1)^i\binom{k}{i} \binom{n}{j} n^{n-j} (-i)^j.
$$
We divide it into three cases; $j=0$, $j=1$, and $j>1$.

In case of $j=0$,
\begin{equation}\label{eq:case1}
\sum_{k=1}^n \sum_{i\ge 0} \frac{1}{k} (-1)^i\binom{k}{i} n^{n} = n^n \sum_{k=1}^n \frac{1}{k} \,(1-1)^k =0.
\end{equation}

In case of $j=1$,
\begin{align}
\sum_{k=1}^n \sum_{i\ge 0}  \frac{1}{k} (-1)^i\binom{k}{i} n^{n} (-i)
&= n^n \sum_{k=1}^n \sum_{i\ge 0} (-1)^{i-1}\binom{k-1}{i-1} \notag \\
&= n^n \sum_{k=1}^n (1-1)^{k-1} = n^n. \label{eq:case2}
\end{align}

In case of $j>1$,
\begin{align}
\sum_{j > 1} \sum_{k=1}^n \sum_{i\ge 0} \frac{1}{k} (-1)^i &\binom{k}{i} \binom{n}{j} n^{n-j} (-i)^j \notag\\
&= \sum_{j > 1} \sum_{i\ge 0}  (-1)^{i} \binom{n}{j} n^{n-j}\, (-1)^{j} \, i^{j-1} \sum_{k=1}^n \binom{k-1}{i-1} \notag \\
&= \sum_{j > 1} \sum_{i\ge 0}  (-1)^{i+j} \binom{n}{j} n^{n-j} i^{j-1} \binom{n}{i} \notag\\
&= \sum_{j > 1} (-1)^{j} \binom{n}{j} n^{n-j} \left[ \sum_{i\ge 0}  (-1)^{i} \binom{n}{i} i^{j-1}\right] =0. \label{eq:case3}
\end{align}

Note that, by the principle of inclusion and exclusion, the expression $\sum_{i\ge 0}  (-1)^{i} \binom{n}{i} i^{j-1}$ in \eqref{eq:case3} is the number of surjections from $[j-1]$ to $[n]$. Since $j-1<n$, it is zero.

From \eqref{eq:case1}, \eqref{eq:case2}, and \eqref{eq:case3}, we finally obtain
$\sum_{k=1}^n \frac{1}{k}\abs{\T_{n,k}}= 0 + n^n + 0 = n^n.$
\end{proof}

\section{Inverse Relation}\label{sec:inverse}
For nonnegative integer $n$, let $A(n)$ be the square matrix of size $n+1$ defined by
$$
A(n) := \left[\binom{i+j}{i}\right]_{0 \le i,j \le n}.
$$
Define two column vectors $t(n)$ and $p(n)$ by
\[
t(n):=\left(
        \begin{array}{c}
          \abs{\T_{n,0}} \\
          \abs{\T_{n,1}} \\
          \vdots \\
          \abs{\T_{n,n}} \\
        \end{array}
      \right)
\quad\text{and}\quad
p(n):=\left(
        \begin{array}{c}
          (n+1)^n \\
          (n+2)^n \\
          \vdots \\
          (2n+1)^n \\
        \end{array}
      \right).
\]
Then the equation~\eqref{eq:recur} can be interpreted as
$$
A(n) \,t(n) = p(n) .
$$
The matrix $A(n)$ is nonsingular. Moreover, we can compute its inverse directly.
Let $B(n)$ be the square matrix of size $n+1$ defined by
$$
B(n) := \left[\,(-1)^{i+j}\sum_{m=0}^n \binom{m}{i}\binom{m}{j}\,\right]_{0 \le i,j \le n}.
$$
\begin{thm}\label{lem:inv}
For a nonnegative integer $n$, the two matrices $A(n)$ and $B(n)$ are inverse matrices of each other.
\end{thm}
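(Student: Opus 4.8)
The plan is to show that the $(i,j)$ entry of the product $A(n)\,B(n)$ equals $\delta_{ij}$; since both matrices are square of the same size, this suffices. Writing out the product, the $(i,\ell)$ entry of $A(n)B(n)$ is
\[
\sum_{j=0}^n \binom{i+j}{i}\,(-1)^{j+\ell}\sum_{m=0}^n \binom{m}{j}\binom{m}{\ell}
= \sum_{m=0}^n (-1)^{\ell}\binom{m}{\ell}\sum_{j=0}^n (-1)^{j}\binom{i+j}{i}\binom{m}{j}.
\]
So the whole computation reduces to evaluating the inner alternating sum $\sum_{j}(-1)^j\binom{i+j}{i}\binom{m}{j}$ in closed form. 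I expect this to come out as $(-1)^m\binom{i}{m}$ up to sign bookkeeping — more precisely, the identity $\sum_{j\ge 0}(-1)^j\binom{i+j}{i}\binom{m}{j}=(-1)^m\binom{i}{i-m}$, which is a standard Vandermonde-type alternating convolution (it can be derived by writing $\binom{i+j}{i}=(-1)^j\binom{-i-1}{j}$ and applying the Chu--Vandermonde identity, or by a generating-function argument comparing coefficients in $(1-x)^{-i-1}(1-x)^{m}=(1-x)^{m-i-1}$). Granting this, the $(i,\ell)$ entry becomes $\sum_{m=0}^n (-1)^{\ell}\binom{m}{\ell}(-1)^m\binom{i}{m}=(-1)^{\ell}\sum_{m}(-1)^m\binom{i}{m}\binom{m}{\ell}$.

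The final step is to evaluate $\sum_{m}(-1)^m\binom{i}{m}\binom{m}{\ell}$. Using $\binom{i}{m}\binom{m}{\ell}=\binom{i}{\ell}\binom{i-\ell}{m-\ell}$ and summing over $m$ gives $\binom{i}{\ell}(-1)^{\ell}\sum_{r}(-1)^r\binom{i-\ell}{r}=\binom{i}{\ell}(-1)^{\ell}(1-1)^{i-\ell}$, which vanishes unless $i=\ell$, in which case it equals $(-1)^{\ell}$. Multiplying by the leading $(-1)^{\ell}$ then yields exactly $\delta_{i\ell}$, as desired. One should double-check that the ranges of summation are harmless: in the first identity the upper limit $j\le n$ rather than $j\le i+m$ or $\infty$ costs nothing because $\binom{m}{j}=0$ for $j>m$ and $m\le n$; similarly for $m$ in the range $0,\dots,n$ all the terms that could matter (namely $m\le i\le n$) are present.

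The main obstacle is simply identifying and correctly applying the right binomial identity for $\sum_j (-1)^j\binom{i+j}{i}\binom{m}{j}$; once the sign conventions are pinned down (the upper-negation $\binom{i+j}{i}=(-1)^j\binom{-1-i}{j}$ is the cleanest route), the rest is a routine double application of Vandermonde and the standard subset-of-a-subset identity. An alternative, more conceptual route would be to recognize $A(n)$ as a product of a lower-triangular Pascal matrix with its transpose — indeed $\binom{i+j}{i}=\sum_m\binom{i}{m}\binom{j}{m}$ — so that $A(n)=P P^{T}$ where $P=\left[\binom{i}{j}\right]$, and then use the well-known inverse $P^{-1}=\left[(-1)^{i+j}\binom{i}{j}\right]$ to get $A(n)^{-1}=(P^{T})^{-1}P^{-1}=\left[(-1)^{i+j}\sum_m\binom{m}{i}\binom{m}{j}\right]=B(n)$ directly. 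I would likely present this factorization argument as the main proof, since it explains the shape of $B(n)$, and relegate the brute-force summation to a remark.
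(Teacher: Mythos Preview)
Your direct computation is correct and is essentially identical to the paper's own proof: the paper also writes out $\sum_l A(n)_{i,l}B(n)_{l,j}$, swaps the sums, evaluates the inner alternating sum via the very same generating-function identity (comparing coefficients of $q^m$ in $(1-q)^{-(i+1)}(1-q)^m=(1-q)^{m-i-1}$) to get $\binom{i}{m}$, and then finishes with the subset-of-a-subset identity $\binom{i}{m}\binom{m}{j}=\binom{i}{j}\binom{i-j}{m-j}$ and the binomial theorem on $(1-1)^{i-j}$. Your remarks about the summation ranges being harmless are also in order.

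The factorization argument you propose as your main presentation, however, is \emph{not} in the paper and is a genuinely nicer route. Writing $\binom{i+j}{i}=\sum_m\binom{i}{m}\binom{j}{m}$ gives $A(n)=PP^{T}$ with $P=\bigl[\binom{i}{j}\bigr]$, and then $A(n)^{-1}=(P^{-1})^{T}P^{-1}$ together with the standard $P^{-1}=\bigl[(-1)^{i+j}\binom{i}{j}\bigr]$ yields $B(n)$ immediately. This explains structurally why $B(n)$ has the shape it does, whereas the paper's (and your first) brute-force calculation merely verifies it. If you lead with the factorization, the result becomes a two-line corollary of well-known facts about the Pascal matrix.
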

\begin{proof}
Here is the calculation:
\begin{align*}
\sum_{l=0}^{n} A(n)_{i,l} B(n)_{l,j}
&=\sum_{l=0}^{n} {i+l \choose i} (-1)^{l+j}\sum_{m=0}^n \binom{m}{l}\binom{m}{j}  \\
&=\sum_{m=0}^n (-1)^{m-j} {m \choose j} \left[ \sum_{l=0}^n {i+l \choose l} {m \choose m-l} (-1)^{m-l} \right]
\end{align*}
Comparing the coefficients of $q^m$ for
$(1-q)^{-(i+1)} (1-q)^m = (1-q)^{-(i+1-m)}$,
we obtain $$\sum_{l=0}^n {i+l \choose l} {m \choose m-l} (-1)^{m-l} = {i \choose m}.$$
Thus, we have 
\begin{align*}
\sum_{l=0}^{n} A(n)_{i,l} B(n)_{l,j}
&=\sum_{m=0}^n (-1)^{m-j} {m \choose j}  {i \choose m}\\
&= {i \choose j} \sum_{m=0}^n (-1)^{m-j}   {i-j \choose m-j} {i \choose j} (1-1)^{i-j} = \delta_{i,j}
\end{align*}
which completes the proof.
\end{proof}

From the matrix identity $B(n) \, p(n) = t(n)$, we obtain another expression for $\abs{\T_{n,k}}$.
\begin{cor} For nonnegative integers $n$ and $k$, we have
\begin{equation}\label{eq:another}
\abs{\T_{n,k}} = \sum_{0 \le l \le m \le n} (-1)^{k+l}\binom{m}{k} \binom{m}{l} (n+1+l)^n\,.
\end{equation}
\end{cor}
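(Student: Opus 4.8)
The plan is to read off the Corollary directly from Theorem~\ref{lem:inv} together with the matrix reformulation of Theorem~\ref{thm:T}. First I would observe that collecting equation~\eqref{eq:recur} for $\alpha = 0, 1, \dots, n$ is precisely the statement $A(n)\, t(n) = p(n)$, since the $(\alpha, k)$ entry of $A(n)$ is $\binom{k+\alpha}{\alpha}$ and the $\alpha$-th coordinate of $p(n)$ is $(n+1+\alpha)^n$. By Theorem~\ref{lem:inv}, $A(n)$ is invertible with $A(n)^{-1} = B(n)$, so multiplying on the left by $B(n)$ yields $t(n) = B(n)\, p(n)$.

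Next I would extract the $k$-th coordinate of this vector identity. The $(k,l)$ entry of $B(n)$ is $(-1)^{k+l}\sum_{m=0}^n \binom{m}{k}\binom{m}{l}$ and the $l$-th coordinate of $p(n)$ is $(n+1+l)^n$, so
\[
\abs{\T_{n,k}} = \sum_{l=0}^{n} B(n)_{k,l}\,(n+1+l)^n = \sum_{l=0}^{n}\sum_{m=0}^{n} (-1)^{k+l}\binom{m}{k}\binom{m}{l}\,(n+1+l)^n .
\]
Finally I would tidy the index set: since $\binom{m}{l} = 0$ whenever $l > m$, every term with $l > m$ vanishes, so the double sum over $0 \le l, m \le n$ collapses to the sum over $0 \le l \le m \le n$ displayed in~\eqref{eq:another} (the factor $\binom{m}{k}$ likewise silently kills the terms with $m < k$).

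There is essentially no obstacle here, as the statement is a formal consequence of the already-established inverse relation; the only place to be slightly careful is the index-set bookkeeping in the last step, i.e.\ checking that replacing the rectangular summation range by the triangular one $0 \le l \le m \le n$ does not discard any nonzero term. If one instead wanted a proof that bypasses the matrix language entirely, the substantive work would be to verify the binomial identity $\sum_{l=0}^{n}{i+l \choose l}{m \choose m-l}(-1)^{m-l} = {i \choose m}$ used in the proof of Theorem~\ref{lem:inv} and to recombine the resulting sums against $p(n)$ by hand, but since Theorem~\ref{lem:inv} is available this is unnecessary.
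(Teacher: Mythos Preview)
Your proposal is correct and mirrors the paper's own argument exactly: the paper simply states that the corollary follows from the matrix identity $B(n)\,p(n)=t(n)$ obtained from Theorem~\ref{lem:inv}, and you have spelled out that derivation, including the harmless collapse from the rectangular to the triangular index set via $\binom{m}{l}=0$ for $l>m$.
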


\section{Remarks}\label{sec:conj}
Since $\abs{\T_{n,k}} = \abs{\F_{n,k}}$,
it is desired to construct a bijection between $\T_{n,k}$ and $\F_{n,k}$ for all $0 \le k \le n$.
Also, it is natural to ask a bijective proof of \eqref{eq:div}.
Recently, Jang Soo Kim \cite{Kim11} constructed bijections for the above questions.
It would be interesting to give a combinatorial explanation of \eqref{eq:another}.

\section*{Acknowleagement}
The authors thank to  Fr\'ed\'eric Chapoton who first asked this question while the second author in Lyon, and to the anonymous referees for their valuable comments and suggestions to improve this paper.
This study was supported by 2008 Research Grant from Kangwon National University to the first author. 
For the second author, this work was supported by INHA UNIVERSITY Research Grant (INHA-42830).




\begin{thebibliography}{{Kim}11}

\bibitem[BL10]{BL10}
Nantel Bergeron and Muriel Livernet.
\newblock A combinatorial basis for the free {L}ie algebra of the labelled
  rooted trees.
\newblock {\em J. Lie Theory}, 20(1):3--15, 2010.

\bibitem[CDG00]{CDG00}
Cedric Chauve, Serge Dulucq, and Olivier Guibert.
\newblock Enumeration of some labelled trees.
\newblock In {\em Formal power series and algebraic combinatorics ({M}oscow,
  2000)}, pages 146--157. Springer, Berlin, 2000.

\bibitem[{Kim}11]{Kim11}
J.~S. {Kim}.
\newblock {Bijections on rooted trees with fixed size of maximal decreasing
  subtrees}.
\newblock {\em ArXiv e-prints}, August 2011.
\newblock arXiv:1108.6038.

\bibitem[Sta99]{Sta99}
Richard~P. Stanley.
\newblock {\em Enumerative combinatorics. {V}ol. 2}, volume~62 of {\em
  Cambridge Studies in Advanced Mathematics}.
\newblock Cambridge University Press, Cambridge, 1999.
\newblock With a foreword by Gian-Carlo Rota and appendix 1 by Sergey Fomin.

\end{thebibliography}

\end{document}